\newtheorem{remark}[theorem]{Remark}
\newtheorem{example}[theorem]{Example}
\begin{document}



\bibliographystyle{plain}
\title{
A parametrization of matrix conjugacy orbit sets as unions of
affine planes }

\author{
Peteris Daugulis\thanks{Department of Mathematics, Daugavpils
University, Daugavpils, Parades 1, Latvia
(peteris.daugulis@du.lv).}
}

\pagestyle{myheadings} \markboth{P.Daugulis}{A note on
generalization of eigenvector centrality for bipartite graphs and
applications} \maketitle

\begin{abstract}
The problem of finding a canonical form of complex matrices up to
conjugacy with the set of canonical matrices being a union of
affine planes in the matrix space is considered. A solution of the
problem is given producing a new canonical form for matrices over
algebraically closed fields.
\end{abstract}

\begin{keywords}
Jordan form, rational form, eigenvalue, Weyr characteristic,
affine plane.

MSC 15A21.
\end{keywords}


\section{Introduction} \label{intro-sec}
It is known that sets of representatives of matrix conjugacy
classes in Jordan or rational canonical (elementary divisors
version) forms in general do not constitute algebraic varieties in
the ambient matrix spaces.

\begin{example} Consider $2\times 2$ matrices over $\mathbb{C}$.
A set of the Jordan forms can be chosen as $J_{1}\cup J_{2}$ where

\begin{eqnarray*}J_{1}=
\{\left[%
\begin{array}{cc}
  a &  \\
   & b \\
\end{array}%
\right],\forall\ a,b\in \mathbb{C}\}, J_{2}=\{\left[%
\begin{array}{cc}
  c &  \\
  1 & c \\
\end{array}%
\right], \forall\ c\in \mathbb{C}\}.
\end{eqnarray*}
If $a\neq b$ then matrices $\left[%
\begin{array}{cc}
  a &  \\
   & b \\
\end{array}%
\right]$ and $\left[%
\begin{array}{cc}
  b &  \\
   & a \\
\end{array}%
\right]$ are conjugate therefore $J_{1}$ contains two
representatives of each matrix conjugacy class with two distinct
eigenvalues.
\end{example}

M.L.Kontsevich \cite{A}, p.127, has proposed the problem of
finding a canonical form of matrices over $\mathbb{C}$ for which
the set of representatives of matrix conjugacy classes would be a
union of disjoint affine planes - cosets in the ambient
$\mathbb{C}$-linear matrix space modulo a subspace. In other
words, for each $n\ge 2\ $ find a union of disjoint affine planes
$\mathcal{A}$ in the $\mathbb{C}$-linear space
$\mathcal{M}at(n,n,\mathbb{C})\simeq \mathbb{C}^{n^2}$ so that
each matrix conjugacy orbit intersects $\mathcal{A}$ in exactly
one point. The problem may be motivated by a desire to provide
more links between the representation theory of algebras and
algebraic geometry.

Note that neither the Jordan form nor the rational canonical forms
 satisfy this condition. The rational canonical form with
invariant factors may be investigated if one wants to use affine
varieties of higher degrees but this is not within the scope of
this paper.

In this paper we present a solution of the above problem by
developing a new canonical form of matrices which we call \sl
canonical plane form.\rm\ Matrices considered are defined over
$\mathbb{C}$. Similar results can be proved for any algebraically
closed field.

\section{Description of canonical plane matrices and affine planes}

The goal of this section is to describe complex matrices called
\sl canonical plane matrices\rm\ which will be interpreted as
points of affine planes in the $\mathbb{C}$-linear matrix spaces.
In the first step we define matrices over multivariate polynomial
rings. Then we obtain complex matrices using substitutions.

Matrices over polynomial rings are denoted using bold letters.
$\textbf{E}_{m}$ denotes the identity matrix.

\subsection{Matrices over multivariate polynomial rings}

\subsubsection{Partitions}
Given $n\in \mathbb{N}, n\ge 2$, we consider all
 (nonincreasing) partitions of $n$, i.e. sequences of natural
numbers  - addends $(n_{1},...,n_{d})$ such that $n_{i}\ge
n_{i+1}$, $\forall\ i:\ 1\le i\le d-1$ and
$\sum\limits_{i=1}^{d}n_{i}=n$. Maximal constant contiguous
subsequences, i.e. maximal subsequences $(n_{r},...,n_{r+s})$ with
$n_{r}=...=n_{r+s}$ will be called \sl stacks.\rm\ A partition of
$n$ having $t$ stacks of lengths $l_{1},...,l_{t}$ with distinct
addends $m_{1}>m_{2}>...>m_{t}$ may be denoted, symbolically, as
$(m_{1}^{l_{1}},...,m_{t}^{l_{t}})$.
\subsubsection{Diagonal matrices}\label{212} Given a partition
$\pi=(n_{1},n_{2},...,n_{d})$ of $n$ define a diagonal matrix
\begin{eqnarray*} \textbf{D}_{\pi}=\bigoplus_{i=1}^{d}X_{i}\cdot
\textbf{E}_{n_{i}}\in
\mathcal{M}at(n,n,\mathbb{C}[X_{1},...,X_{d}]).
\end{eqnarray*}
We use the convention that indices of the variables in diagonal
blocks are arranged in the increasing order starting from the
upper left corner as shown in the example.

\begin{example} Let $n=8$, $\pi=(3,3,2)$, then $$ \textbf{D}_{\pi}=X_{1}\textbf{E}_{3}\oplus X_{2}\textbf{E}_{3}\oplus X_{3}\textbf{E}_{2}=\footnotesize\left[%
\begin{array}{ccc|ccc|cc}
  X_{1} &  &  &  &  &  &  &  \\
   &  X_{1}&  &  &  &  &  &  \\
   &  & X_{1} &  &  &  &  &  \\
  \hline
   &  &  &  X_{2}&  &  &  &  \\
   &  &  &  &  X_{2}&  &  &  \\
   &  & &  &  &  X_{2}&  &  \\
  \hline
   &  &  &  &  &  & X_{3} &  \\
   &  &  &  &  & &  & X_{3} \\
\end{array}%
\right]\normalsize .
$$\rm
\end{example}
\subsubsection{Generalized companion matrices}\rm

For any $l\le d$ define a $l\times l$ matrix
$\textbf{R}_{l}(X_{i_{1}},...,X_{i_{l}})$ as follows:
\begin{eqnarray*}
\textbf{R}_{l}(X_{i_{1}},...,X_{i_{l}})=
\left[%
\begin{array}{cccc|c}
  0& 0& ...& 0&(-1)^{l+1}e_{l}(X_{i_{1}},...,X_{i_{l}})\\
 1  & 0 &  ...&0&(-1)^{l}e_{l-1}(X_{i_{1}},...,X_{i_{l}})\\
  0 &  1  &...& 0&\\
  ... & ...& ... &...&...\\
   0& 0&  ... & 1&e_{1}(X_{i_{1}},...,X_{i_{l}})
\end{array}%
\right]
\end{eqnarray*}
where $e_{i}$ is the elementary symmetric polynomial of degree $i$
in $l$ variables. It is the
 $l\times l$ companion matrix having $l$ distinct "eigenvalues"
$X_{i_{1}},...,X_{i_{l}}$.

%

\begin{example}$\textbf{R}_{3}(X_{1},X_{2},X_{3})=\left[%
\begin{array}{cc|c}
  0 & 0 & X_{1}X_{2}X_{3} \\
  1 & 0 & -X_{1}X_{2}-X_{1}X_{3}-X_{2}X_{3} \\
  0 & 1 & X_{1}+X_{2}+X_{3} \\
\end{array}%
\right]$.
\end{example}
\newpage

\subsubsection{Block lower triangular matrices}\rm

Given a partition of $\pi=(n_{1},...,n_{d})$ of $n$ define a lower
block triangular matrix $\textbf{P}_{\pi}$ by modifying
$\textbf{D}_{\pi}$ in two steps as follows:
\begin{enumerate}
    \item for each pair $(n_{j},n_{j+1})$ with $n_{j}>n_{j+1}$ insert
    the
    mat\-rix

    $\textbf{F}_{n_{j+1},n_{j}}=[\textbf{O}_{n_{j+1},n_{j}-n_{j+1}}|\textbf{E}_{n_{j+1}}]$ as shown:
\begin{eqnarray*}
\left[%
\begin{array}{c|c}
  X_{j}\cdot \textbf{E}_{n_{j}} &  \\
  \hline
   & X_{j+1}\cdot \textbf{E}_{n_{j+1}} \\
\end{array}%
\right] \longrightarrow
\left[%
\begin{array}{c|c}
  X_{j}\cdot \textbf{E}_{n_{j}} &  \\
  \hline
  \textbf{F}_{n_{j+1},n_{j}} & X_{j+1}\cdot \textbf{E}_{n_{j+1}} \\
\end{array}%
\right],
\end{eqnarray*}
\bigskip
    \item for each stack $(n_{r},n_{r+1},...,n_{r+u})$ of $\pi$ with
    $n_{r}=...=n_{r}=m$ substitute the submatrix
\begin{eqnarray*}
\left[%
\begin{array}{c|c|c|c|c}
  X_{r}\cdot \textbf{E}_{m} & & & &\\
  \hline
   & X_{r+1}\cdot \textbf{E}_{m} & & &\\
  \hline
   &   & X_{r+2}\cdot \textbf{E}_{m} & &\\
   \hline
   & &  & ... &\\
   \hline
   & & & & X_{r+u}\cdot \textbf{E}_{m}
\end{array}%
\right]\end{eqnarray*} by
$$
\textbf{R}_{u+1}(X_{r},...,X_{r+u})\otimes \textbf{E}_{m}=$$
\begin{eqnarray*}
\left[%
\begin{array}{c|c|c|c|c}
  & & & & (-1)^{u+2}e_{u+1}(X_{r},...,X_{r+u})\cdot \textbf{E}_{m}\\
  \hline
\textbf{E}_{m}  &  & & &(-1)^{u+1}e_{u}(X_{r},...,X_{r+u})\cdot \textbf{E}_{m}\\
  \hline
   &  \textbf{E}_{m} &  & &\\
   \hline
   & &  & ... &\\
   \hline
   & & & \textbf{E}_{m} & e_{1}(X_{r},...,X_{r+u})\cdot \textbf{E}_{m}
\end{array}%
\right]
\end{eqnarray*}
\end{enumerate}
Thus stacks of $\pi$ bijectively correspond to diagonal blocks of
$\textbf{P}_{\pi}$ and off-diagonal blocks of $\textbf{P}_{\pi}$
are identity matrices coming from $\textbf{F}_{n_{j+1},n_{j}}$.
%
%

\begin{example} Let $n=8$, $\pi=(3,3,2)$, then $$ \textbf{P}_{\pi}=\footnotesize\left[%
\begin{array}{cccccc|cc}
   &  &  & -X_{1}X_{2} &  &  &  &  \\
   &  &  &  & -X_{1}X_{2} &  &  &  \\
   &  &  &  &  & -X_{1}X_{2} &  &  \\
  1 &  &  & X_{1}+X_{2} &  &  &  &  \\
   & 1 &  &  & X_{1}+X_{2} &  &  &  \\
   &  & 1 &  &  & X_{1}+X_{2} &  &  \\
  \hline
   &  &  &  & 1 &  & X_{3} &  \\
   &  &  &  &  & 1 &  & X_{3} \\
\end{array}%
\right]\normalsize .
$$\rm
\end{example}

\subsection{Complex canonical plane matrices and affine planes}

\subsubsection{Substitutions maps}


Define the substitution map
$$\varphi:\mathcal{M}at(n,n,\mathbb{C}[X_{1},...,X_{d}])\times
\mathbb{C}^{d} \rightarrow \mathcal{M}at(n,n,\mathbb{C}),$$
$$
\Big([f_{ij}],(a_{1},...,a_{d})\Big)\mapsto
[f_{ij}(a_{1},...,a_{d})].
$$
Denote the image of $\textbf{P}_{\pi}\times \mathbb{C}^{d}$ under
$\varphi$ by $\mathcal{P}_{\pi}$.

\subsubsection{Description of canonical plane matrices} Let $f=X^{l}-\sum\limits_{i=0}^{l-1}a_{i}X^{i}\in
\mathbb{C}[X]$. Define a companion matrix $C_{f}$ of $f$ in a
standard form:
\begin{eqnarray*}
C_{f}=C(a_{0},...,a_{l-1})=\left[%
\begin{array}{cccc|c}
  0& 0& ...& 0&a_{0}\\
 1  & 0 &  ...&0&a_{1}\\
  0 &  1  &...& &\\
  ... & ...& ... &...&...\\
   0& 0&  ... & 1&a_{l-1}
\end{array}%
\right]
\end{eqnarray*}
Each $A\in \mathcal{P}_{\pi}$ is a block lower triangular matrix
with the diagonal blocks in form $C(a_{0},...,a_{l-1})\otimes
\textbf{E}_{m}$ for some $l$ and $(a_{0},...,a_{l-1})\in
\mathbb{C}^{l}$.

\begin{example}\label{run}
Let $n=8$, $\pi=(3,3,2)$, then $$A=
\scriptsize\left[%
\begin{array}{c|c|c|c|c|c|c|c}
   &  &  & -2 &  &  &  &  \\
   \hline
   &  &  &  & -2 &  &  &  \\
   \hline
   &  &  &  &  & -2 &  &  \\
   \hline
  1 &  &  & 3 &  &  &  &  \\
  \hline
   & 1 &  &  & 3 &  &  &  \\
   \hline
   &  & 1 &  &  & 3 &  &  \\
  \hline
   &  &  &  & 1 &  & 1 &  \\
   \hline
   &  &  &  &  & 1 &  & 1 \\
\end{array}%
\right]\normalsize=\varphi(\textbf{P}_{\pi},(1,2,1)).
$$
\end{example}
\rm

\subsubsection{Inverse image of a canonical plane matrix} Given $A\in
\mathcal{P}_{\pi}$ we can find an element of its inverse image
under $\varphi$ as follows. For each diagonal block
$C(a_{0},...,a_{l-1})\otimes \textbf{E}_{m}$ of $A$ solve the
equation $X^{l}-\sum\limits_{i=0}^{l-1}a_{i}X^{i}=0$, get the
multiset of roots $[\lambda_{1},...,\lambda_{l}]$, its elements
are the complex numbers which must be substituted (in any order)
into the polynomial arguments in the corresponding diagonal block
of $\textbf{P}_{\pi}$.
 Note that the vector $(a_{0},...,a_{l-1})$
uniquely determines the multiset of roots
$[\lambda_{1},...,\lambda_{l}]$ and vice versa.

\subsubsection{Canonical affine planes}

Affine planes in a linear space $L$ are identified with additive
cosets modulo a subspace $V$ of $L$. Thus to define an affine
plane $\mathcal{P}\subseteq L$ we need to fix one element $l\in
\mathcal{P}$ (a constant shift) and describe $V$ (linear part).
Dimension of $\mathcal{P}$ is equal to $\dim V$.
\begin{theorem} \label{aff-th} $n\in \mathbb{N}$, $n\ge 2$, $\pi=(n_{1},...,n_{d})$
- a partition of $n$. Then $\mathcal{P}_{\pi}$
 is an affine
plane in $\mathcal{M}at(n,n,\mathbb{C})$ of dimension $d$.
\end{theorem}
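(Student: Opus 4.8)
The plan is to make the parametrization $\varphi(\textbf{P}_{\pi},\cdot):\mathbb{C}^{d}\to \mathcal{M}at(n,n,\mathbb{C})$ completely explicit and then reparametrize it so that it becomes an affine map. First I would inspect the entries of $\textbf{P}_{\pi}$. By construction every entry is either a constant ($0$ or $1$, coming from the companion pattern inside the diagonal blocks $\textbf{R}$ and from the off-diagonal blocks $\textbf{F}$) or, up to sign, an elementary symmetric polynomial $e_{i}(X_{r},\ldots,X_{r+u})$ in the variables attached to a single stack. Moreover each such polynomial entry occurs only in the last block-column of the corresponding diagonal block $\textbf{R}_{u+1}(X_{r},\ldots,X_{r+u})\otimes \textbf{E}_{m}$, where it appears as $\pm e_{i}\cdot \textbf{E}_{m}$. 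Writing $M_{0}$ for the constant matrix obtained by deleting all the polynomial entries, I would record the identity $\varphi(\textbf{P}_{\pi},a)=M_{0}+\sum_{k} g_{k}(a)\,B_{k}$, where the $B_{k}$ are the fixed $0/1$ ``slot'' matrices carrying the copies of $\textbf{E}_{m}$ and the $g_{k}$ run over the signed elementary symmetric polynomials of the various stacks. There are exactly $d$ of them, since a stack of length $l$ contributes the $l$ polynomials $e_{1},\ldots,e_{l}$ and the stack lengths sum to $d$.

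The key step is the reparametrization $a\mapsto c=(g_{1}(a),\ldots,g_{d}(a))$. In the new coordinate $c$ the map is manifestly affine, namely $c\mapsto M_{0}+\sum_{k} c_{k}B_{k}$, so $\mathcal{P}_{\pi}$ is an affine plane provided $c$ ranges over all of $\mathbb{C}^{d}$. This is where algebraic closedness enters: for a single stack on $l$ variables the map $(X_{r},\ldots,X_{r+u})\mapsto(e_{1},\ldots,e_{l})$ is surjective onto $\mathbb{C}^{l}$, because any prescribed values of the $e_{i}$ are realized by the roots of the associated monic polynomial of degree $l$, which exist over $\mathbb{C}$. Since distinct stacks involve disjoint sets of variables, the combined map $a\mapsto c$ is surjective onto $\mathbb{C}^{d}$. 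Hence the image of $\varphi(\textbf{P}_{\pi},\cdot)$ coincides with the full affine plane $M_{0}+\mathrm{span}(B_{1},\ldots,B_{d})$, which establishes that $\mathcal{P}_{\pi}$ is an affine coset.

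It remains to compute the dimension, which equals $\dim\mathrm{span}(B_{1},\ldots,B_{d})$. I would argue the $B_{k}$ are linearly independent by noting that their supports are pairwise disjoint: within one diagonal block the polynomials $e_{1},\ldots,e_{l}$ sit in distinct block-rows of the last block-column, and blocks coming from different stacks occupy different diagonal positions of $\textbf{P}_{\pi}$, while each $B_{k}$ restricts to $\textbf{E}_{m}\neq 0$ on its own slot. Thus the $d$ matrices $B_{k}$ are independent and $\dim\mathcal{P}_{\pi}=d$. The only genuinely nontrivial point in this argument is the second paragraph: although the entries of $\textbf{P}_{\pi}$ are nonlinear (indeed of high degree) in the $X_{i}$, the image is nevertheless flat, and the mechanism responsible is precisely the surjectivity of the elementary-symmetric-polynomial map over an algebraically closed field, which converts the nonlinear parametrization into a linear one after a change of coordinates.
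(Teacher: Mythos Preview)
Your proposal is correct and follows essentially the same approach as the paper: both split each $A\in\mathcal{P}_{\pi}$ into a fixed constant matrix (your $M_{0}$, the paper's shift $S$) plus a variable part lying in the span of ``slot'' matrices indexed by the last block-columns of the generalized companion blocks, and both invoke solvability of the stack polynomials over $\mathbb{C}$ to obtain surjectivity onto that $d$-dimensional linear span. Your write-up is slightly more explicit about the reparametrization $a\mapsto c$ and about the disjoint-support argument for linear independence of the $B_{k}$, but the underlying idea is identical.
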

\begin{proof} We will express $A\in\mathcal{P}_{\pi}$ as a sum
\begin{eqnarray*}
A=S+L(A)
\end{eqnarray*}
 of a constant matrix $S$ (the shift) and a variable
matrix $L(A)$ (the linear part) as follows.  The nonzero elements
of $S$ are the off-diagonal blocks together with the identity
submatrices of generalized companion matrices. Thus $S$ is
uniquely defined for all matrices in $\mathcal{P}_{\pi}$. We
define $L(A)=A-S$ and observe that the possibly nonzero elements
of $L(A)$ are the blocks of form $[a_{0}|...|a_{l-1}]^{T}\otimes
\textbf{E}_{m}$ corresponding to last columns of generalized
companion matrices.

%

Supppose $A\in \mathcal{P}_{\pi}$ has the $t$ diagonal blocks
$C(a_{j0},...,a_{j,l_{j}-1})\otimes \textbf{E}_{m_{j}}$, $\forall
j\in \{1,...,t\}$. For each $j\in \{1,...,t\}$ solve the \sl
$j$-th stack equation\rm\
$X^{l_{j}}-\sum\limits_{i=0}^{l_{j}-1}a_{ji}X^{i}=0$,
 get the multiset of
roots $[\lambda_{j1},...,\lambda_{j,l_{j}}]$, $\forall j\in
\{1,...,t\}$.  We have that
\begin{eqnarray*}
A=\varphi_{\mathcal{M}}(\textbf{P}_{\pi},(\lambda_{11},...,\lambda_{1,l_{1}},...,\lambda_{t1},...,\lambda_{t,l_{t}})).
\end{eqnarray*}
Note that the ordering of the roots within each stack does not
change the value of $\varphi$. We see that $A\in
\mathcal{P}_{\pi}$ can be constructed for any sequence of vectors
$(a_{10},...,a_{1,l_{1}-1})$, ..., $(a_{t0},...,a_{t,l_{t}-1})$
thus $A-S$ runs through a linear subspace $V_{\pi}$ of
$\mathcal{M}at(n,n,\mathbb{C})$ as $A$ runs through
$\mathcal{P}_{\pi}$. We see that $\dim V_{\pi}$  is the sum of the
dimensions of vectors
$(a_{10},...,a_{1,l_{1}-1})$,...,$(a_{t0},...,a_{t,l_{t}-1})$
which is equal to $d$.
\end{proof}

\begin{example}
Consider the matrix $A$ of example \ref{run}. In this case
$$S=\footnotesize\left[%
\begin{array}{c|c|c|c|c|c|c|c}
   &  &  &  &  &  &  &  \\
   \hline
   &  &  &  &  &  &  &  \\
   \hline
   &  &  &  &  &  &  &  \\
   \hline
  1 &  &  &  &  &  &  &  \\
  \hline
   & 1 &  &  &  &  &  &  \\
   \hline
   &  & 1 &  &  &  &  &  \\
  \hline
   &  &  &  & 1 &  &  &  \\
   \hline
   &  &  &  &  & 1 &  &  \\
\end{array}%
\right]\normalsize,L(A)=\footnotesize\left[%
\begin{array}{c|c|c|c|c|c|c|c}
   &  &  & -2 &  &  &  &  \\
   \hline
   &  &  &  & -2 &  &  &  \\
   \hline
   &  &  &  &  & -2 &  &  \\
   \hline
   &  &  & 3 &  &  &  &  \\
  \hline
   &  &  &  & 3 &  &  &  \\
   \hline
   &  &  &  &  & 3 &  &  \\
  \hline
   &  &  &  &  &  & 1 &  \\
   \hline
   &  &  &  &  &  &  & 1 \\
\end{array}%
\right]\normalsize.
$$
$\dim V_{\pi}=3$, $V_{\pi}=\langle V_{1},V_{2},V_{3}\rangle$ where
$$
V_{1}=\footnotesize\left[%
\begin{array}{c|c|c|c|c|c|c|c}
   &  &  & 1 &  &  &  &  \\
   \hline
   &  &  &  & 1 &  &  &  \\
   \hline
   &  &  &  &  & 1 &  &  \\
   \hline
   &  &  &  &  &  &  &  \\
  \hline
   &  &  &  &  &  &  &  \\
   \hline
   &  &  &  &  &  &  &  \\
  \hline
   &  &  &  &  &  &  &  \\
   \hline
   &  &  &  &  &  &  &  \\
\end{array}%
\right]\normalsize, V_{2}=\footnotesize\left[%
\begin{array}{c|c|c|c|c|c|c|c}
   &  &  &  &  &  &  &  \\
   \hline
   &  &  &  &  &  &  &  \\
   \hline
   &  &  &  &  &  &  &  \\
   \hline
   &  &  & 1 &  &  &  &  \\
  \hline
   &  &  &  & 1 &  &  &  \\
   \hline
   &  &  &  &  & 1 &  &  \\
  \hline
   &  &  &  &  &  &  &  \\
   \hline
   &  &  &  &  &  &  &  \\
\end{array}%
\right]\normalsize,$$ $$V_{3}=\footnotesize\left[%
\begin{array}{c|c|c|c|c|c|c|c}
   &  &  &  &  &  &  &  \\
   \hline
   &  &  &  &  &  &  &  \\
   \hline
   &  &  &  &  &  &  &  \\
   \hline
   &  &  &  &  &  &  &  \\
  \hline
   &  &  &  &  &  &  &  \\
   \hline
   &  &  &  &  &  &  &  \\
  \hline
   &  &  &  &  &  & 1 &  \\
   \hline
   &  &  &  &  &  &  & 1 \\
\end{array}%
\right]\normalsize.
$$
\end{example}
\begin{theorem} \label{aff-th2} $n\in \mathbb{N}$, $n\ge 2$, $\pi$
and $\pi'$ - distinct partitions of $n$. Then
\begin{eqnarray*}
\mathcal{P}_{\pi}\cap \mathcal{P}_{\pi'}=\emptyset.
\end{eqnarray*}
\end{theorem}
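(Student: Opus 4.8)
My plan is to recover the partition $\pi$ from the matrix $A$ itself by means of a conjugacy invariant, so that $A\in\mathcal{P}_{\pi}\cap\mathcal{P}_{\pi'}$ will force $\pi=\pi'$. The invariant I would use is assembled from the Weyr characteristics of $A$. For each eigenvalue $\lambda$ set $w_{i}(\lambda)=\dim\ker(A-\lambda\mathbf{E}_{n})^{i}-\dim\ker(A-\lambda\mathbf{E}_{n})^{i-1}$, and let $\Omega(A)$ be the multiset of all positive values $w_{i}(\lambda)$, collected over every eigenvalue $\lambda$ and every $i\ge 1$, sorted into a nonincreasing sequence. Since each $w_{i}(\lambda)$ is computed from ranks of powers of $A-\lambda\mathbf{E}_{n}$, the multiset $\Omega(A)$ depends only on $A$ (indeed only on its conjugacy class), and $\sum_{\lambda,i}w_{i}(\lambda)=n$, so $\Omega(A)$ is a partition of $n$. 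It therefore suffices to prove the single identity $\Omega(A)=\pi$ for every $A\in\mathcal{P}_{\pi}$; Theorem~\ref{aff-th2} follows immediately, because then $\pi=\Omega(A)=\pi'$.

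To prove $\Omega(A)=\pi$ I would argue one eigenvalue at a time. Recall that $A$ is block lower triangular with one diagonal block $C(a_{j0},\dots,a_{j,l_{j}-1})\otimes\mathbf{E}_{m_{j}}$ per stack $j$, where stack $j$ has value $m_{j}$ and length $l_{j}$, consecutive stacks being joined by the blocks $\mathbf{F}_{m_{j+1},m_{j}}$; write $f_{j}$ for the $j$-th stack polynomial and $e_{j}(\lambda)$ for the multiplicity of $\lambda$ as a root of $f_{j}$. The core claim is the local formula: for each $\lambda$ the Weyr characteristic of $A$ at $\lambda$ is the partition obtained by concatenating, for $j=1,\dots,t$, exactly $e_{j}(\lambda)$ copies of $m_{j}$ (this is nonincreasing because $m_{1}>\dots>m_{t}$). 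Granting it, the stack $j$ contributes the part $m_{j}$ to $\Omega(A)$ with total multiplicity $\sum_{\lambda}e_{j}(\lambda)=\deg f_{j}=l_{j}$, whence $\Omega(A)=(m_{1}^{l_{1}},\dots,m_{t}^{l_{t}})=\pi$, as required.

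The easy case of the local formula is when the eigenvalues of $A$ are pairwise distinct: distinct stacks then have disjoint spectra, the off-diagonal $\mathbf{F}$-blocks become immaterial (a block triangular matrix whose diagonal blocks have pairwise disjoint spectra is conjugate to their direct sum), and each diagonal block $C(a_{j0},\dots,a_{j,l_{j}-1})\otimes\mathbf{E}_{m_{j}}$ is diagonalizable with each of its $l_{j}$ eigenvalues of multiplicity $m_{j}$; so $A$ is diagonalizable, every Weyr characteristic is a single part, and $\Omega(A)$ is the partition of eigenvalue multiplicities, namely $\pi$. The main obstacle, and the heart of the proof, is the coincident case, where a single $\lambda$ is a root of several stack polynomials. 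Here $C_{f_{j}}\cong\bigoplus_{\mu}J_{e_{j}(\mu)}(\mu)$ together with the fact that $J_{e}(\lambda)\otimes\mathbf{E}_{m}$ is conjugate to the direct sum of $m$ copies of $J_{e}(\lambda)$ shows that stack $j$ in isolation would furnish $m_{j}$ Jordan chains of length $e_{j}(\lambda)$; but the blocks $\mathbf{F}_{m_{j+1},m_{j}}=[\mathbf{O}\,|\,\mathbf{E}_{m_{j+1}}]$, each of full row rank $m_{j+1}$, genuinely splice these chains together across consecutive stacks and alter the kernels of $(A-\lambda\mathbf{E}_{n})^{k}$. I would settle this by computing $\dim\ker(A-\lambda\mathbf{E}_{n})^{k}$ directly from the explicit companion-plus-$\mathbf{F}$ shape, using the tensor factors $\mathbf{E}_{m_{j}}$ and the full rank of each $\mathbf{F}_{m_{j+1},m_{j}}$; the resulting partial sums are exactly the partial sums of the claimed partition $(m_{1}^{e_{1}(\lambda)},\dots,m_{t}^{e_{t}(\lambda)})$, which proves the local formula and hence the theorem.
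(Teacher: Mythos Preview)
Your approach is correct but takes a genuinely different route from the paper. The paper's proof is elementary and non-spectral: it locates the pattern of fixed $1$'s (the support of the constant shift $S$) and argues that for $\pi\neq\pi'$ this pattern differs in at least one matrix entry---either an $\mathbf{F}$-block or the subdiagonal identity of a companion block puts a $1$ where the other plane has a $0$---so the two affine planes cannot meet even as sets of matrices. Your argument instead recovers $\pi$ from $A$ via a conjugacy invariant, the multiset of nonzero Weyr numbers $w_i(\lambda)$; your ``local formula'' is exactly Theorem~\ref{3.9}, which the paper proves later through the decomposition $A\simeq\bigoplus_j(m_j-m_{j+1})G_j$ and the cyclicity of each $G_j$. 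What you gain is a stronger conclusion: matrices in distinct $\mathcal{P}_\pi$ lie in distinct \emph{conjugacy classes}, which essentially absorbs part~(1) of Theorem~\ref{313}. What you pay is that the deferred ``coincident eigenvalue'' computation---the direct determination of $\dim\ker(A-\lambda\mathbf{E}_n)^k$ when $\lambda$ is a root of several stack polynomials---is precisely the substantive work of Section~3, whereas the paper's own proof of the present statement needs none of that machinery and is only a few lines long.
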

\begin{proof} We show that supports of constant shift matrices are different. Given $\pi\neq \pi'$ with $\pi=(n_{1},n_{2},...)$ and
$\pi'=(n_{1}',n_{2}',...)$ consider the distinct pair $n_{j}\neq
n_{j}'$ with the minimal $j$. There are two possibilities: $(1)$
new stacks start from $n_{j}$ and $n_{j}'$ or $(2)$ a new stack
starts from one of $n_{j}$ or $n_{j}'$. In the first case the
off-diagonal blocks of $\textbf{P}_{\pi}$ and $\textbf{P}_{\pi'}$
inserted to the left of diagonal blocks for the new stacks have
$1$'s in different positions. In the second case in
$\textbf{P}_{\pi}$ and $\textbf{P}_{\pi'}$ we have diagonal blocks
of distinct sizes having the same position of upper left corners,
by considering identity matrix blocks of generalized companion
matrices it follows that at least one $1$' in the off-diagonal
block below the shorter diagonal block in, say, $\textbf{P}_{\pi}$
is absent in $\textbf{P}_{\pi'}$.
\end{proof}
\newpage

\section{Main results}
\subsection{Jordan forms of canonical plane matrices}\
\subsubsection{Preparations}\label{311}

Let a partition $\pi$ have $t$ stacks of lengths $l_{1},...,l_{t}$
with distinct addends $m_{1}>m_{2}>...>m_{t}$. Define $m_{t+1}=0$.
Define $s_{j}=\sum\limits_{i=1}^{j}l_{i}$. Define
$P_{j}=\{m_{1}-m_{j}+1,...,m_{1}-m_{j+1}\}$.

Let $A\in \mathcal{P}_{\pi}$ have diagonal blocks
$C(a_{j0},...a_{j,l_{j}-1})\otimes \textbf{E}_{m_{j}}$, $j\in
\{1,...,t\}$. For each $j\in \{1,...,t\}$ solve the $j$th stack
equation

\begin{eqnarray*} X^{l_{j}}-\sum_{i=0}^{l_{j}-1}a_{ji}X^{i}=0,
\end{eqnarray*}
 get the multiset  of roots
$[\lambda_{j1},...,\lambda_{j,l_{j}}]$,$j\in \{1,...,t\}$. Denote
the muliplicity of $\lambda$ as a root for the $i$th stack
equation by $\mu(\lambda,i)$. Denote
$\sum\limits_{i=1}^{j}\mu(\lambda,i)$ by $\alpha(\lambda,j)$.

We think of $A$ acting in $\mathbb{C}^{n}$ - the $n\times 1$
column space with the standart basis
$\mathcal{B}=\{e_{1},...,e_{n}\}$, $e_{i}=E_{i1}$ where $E_{ij}$
is a matrix unit.

The goal of this subsection is to find the Jordan form of $A\in
\mathcal{P}_{\pi}$. This is done in several steps:
\begin{enumerate}
\item by inspecting the digraph of $A$ we find a decomposition of
$\mathbb{C}^{n}$ into a direct sum of $m_{1}$ $A$-invariant
subspaces which are generated by $e_{1}$,...,$e_{m_{1}}$ as
$\mathbb{C}[A]$-modules,
    \item decompose $A$ into a direct sum by permutations,
    \item decompose the obtained direct summands of $A$ into the Jordan form using their block
    structure.
\end{enumerate}

\subsubsection{Invariant subspaces} \label{312} For each $i\in \{1,...,m_{1}\}$
define a $A$-invariant subspace $V_{i}=\mathbb{C}[A]\cdot e_{i}$.
For each $j\in \{1,...,t\}$ define a $s_{j}\times s_{j}$ matrix
\begin{eqnarray*}
G_{j}=\bigoplus_{i=1}^{j}C(c_{i0},...,c_{i,l_{i}-1})+\sum_{i=1}^{j-1}E_{s_{i}+1,s_{i}}.
\end{eqnarray*}
Here we also use the convention about the block ordering as in
\ref{212}, see example \ref{run2} below.

\begin{theorem}\label{31} In the notations of \ref{311} we have
\begin{enumerate}
    \item $\mathbb{C}^{n}=\bigoplus\limits_{i=1}^{m_{1}}V_{i}$,
    \item $\dim V_{i}=s_{j}$ for $i\in P_{j}$,
    \item If $i\in P_{j}$ then the restriction of $A$ to $V_{i}$ with respect to
    $\mathcal{B}$ is $G_{j}$.
\end{enumerate}
\end{theorem}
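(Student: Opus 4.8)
The plan is to make the block structure of $A$ completely explicit by relabelling the standard basis, to read the action of $A$ off that labelling, and then to trace the cyclic module generated by each of $e_1,\dots,e_{m_1}$. First I would index $\mathcal{B}$ by triples $(k,p,q)$, where $k\in\{1,\dots,t\}$ names the stack, $p\in\{1,\dots,l_k\}$ the row-position inside the companion factor of the diagonal block $C(a_{k0},\dots,a_{k,l_k-1})\otimes \mathbf{E}_{m_k}$, and $q\in\{1,\dots,m_k\}$ the position inside the tensor copy of $\mathbf{E}_{m_k}$. The block-ordering convention of Section~2 makes this a bijection with $\{1,\dots,n\}$, and $e_1,\dots,e_{m_1}$ are exactly the vectors $e_{(1,1,q)}$. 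Reading off the companion-plus-$\mathbf{F}$ structure then gives three kinds of nonzero action: the within-stack subdiagonal moves $A e_{(k,p,q)}=e_{(k,p+1,q)}$ for $p<l_k$; the companion last-column action $A e_{(k,l_k,q)}=\sum_{p=1}^{l_k}a_{k,p-1}e_{(k,p,q)}$; and one inter-stack edge contributing the extra summand $e_{(k+1,1,\,q-(m_k-m_{k+1}))}$ to $A e_{(k,l_k,q)}$, present exactly when $k<t$ and $q>m_k-m_{k+1}$.

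Next I would trace $V_i=\mathbb{C}[A]\,e_i$ for $i=q\in\{1,\dots,m_1\}$. Starting from $e_{(1,1,q)}$ the subdiagonal moves walk through $e_{(1,2,q)},\dots,e_{(1,l_1,q)}$, and applying $A$ once more produces the companion combination together with $e_{(2,1,\,q-(m_1-m_2))}$ whenever $q>m_1-m_2$; this summand genuinely lies in $V_i$, being $A^{l_1}e_i-\sum_{p=1}^{l_1}a_{1,p-1}A^{p-1}e_i$. Setting $q^{(1)}=q$ and $q^{(k+1)}=q^{(k)}-(m_k-m_{k+1})$, an induction gives $q^{(k)}=q-(m_1-m_k)$ and shows that the jump from stack $k$ to $k+1$ occurs iff $q>m_1-m_{k+1}$. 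Hence the track of $e_q$ visits precisely stacks $1,\dots,j$ when $m_1-m_j<q\le m_1-m_{j+1}$, i.e. when $q\in P_j$; in that case $V_i$ is spanned by the $s_j$ basis vectors $\{e_{(k,p,q^{(k)})}:1\le k\le j,\ 1\le p\le l_k\}$, which is part~(2). Ordering this basis by $(k,p)$ lexicographically and reading off the three action types yields the subdiagonal $1$'s inside each block $C(a_{k0},\dots,a_{k,l_k-1})$, the companion last columns, and a single $1$ in position $(s_k+1,s_k)$ linking consecutive blocks for $k=1,\dots,j-1$; this is exactly $G_j$, proving~(3).

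For part~(1) I would use that each $V_i$ equals the span of its track: the track vectors lie in $V_i$ by the extraction just described, and their span is $A$-invariant and contains $e_i$, hence contains $V_i$. Two tracks starting from $q\ne q'$ use the distinct coordinates $q-(m_1-m_k)$ and $q'-(m_1-m_k)$ at every stack $k$ they both reach, so they share no basis vector and the sum $\sum_{i=1}^{m_1}V_i$ is direct. Conversely every basis vector $e_{(k,p,r)}$ lies in the track of $q=r+(m_1-m_k)\in\{1,\dots,m_1\}$, since that track reaches stack $k$ with coordinate $r$; thus the tracks partition $\mathcal{B}$ and $\bigoplus_{i=1}^{m_1}V_i=\mathbb{C}^n$. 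As a consistency check, $\sum_{j=1}^{t}|P_j|\,s_j=\sum_{j=1}^{t}(m_j-m_{j+1})s_j=\sum_{k=1}^{t}l_k m_k=n$ by Abel summation with $m_{t+1}=0$.

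The main obstacle is purely bookkeeping: pinning down where the inserted blocks $\mathbf{F}_{m_{k+1},m_k}=[\mathbf{O}\,|\,\mathbf{E}_{m_{k+1}}]$ sit after the stack substitution of Section~2, i.e. confirming that the unique inter-stack edge leaves the bottom companion block $p=l_k$ of stack $k$, enters the top block $p=1$ of stack $k+1$, and shifts the tensor coordinate by $q\mapsto q-(m_k-m_{k+1})$, with no further off-diagonal edges. Once this single local fact is checked against the construction, the three claims follow from the clean induction above, and the running example ($\pi=(3,3,2)$, where $P_1=\{1\}$, $P_2=\{2,3\}$, $\dim V_1=2=s_1$ and $\dim V_2=\dim V_3=3=s_2$) serves as a verification of the signs and indexing.
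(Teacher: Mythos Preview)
Your argument is correct and follows essentially the same approach as the paper: the paper's proof is only a sketch that says to inspect the digraph of $A$, trace the images of $e_1,\dots,e_{m_1}$ under powers of $A$, and observe that the weakly connected components of the digraph give the $V_i$ with the stated dimensions and restrictions, with ``further details omitted.'' Your relabelling by triples $(k,p,q)$ and the explicit computation of the three edge types is exactly the detailed verification the paper leaves out, including the key bookkeeping point you flag about where the $\mathbf{F}$-blocks sit relative to the stack substitution.
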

\begin{proof}
The statements are proved by considering the images of
$e_{1},...,e_{m_{1}}$ under powers of $A$, using the digraph of
$A$ and induction. The digraph of $A$ with respect to
$\mathcal{B}$ decomposes into $m_{1}$ weakly connected components
corresponding to $V_{1},...,V_{m_{1}}$. Vertex sets of these
components and restrictions of $A$ can be explicitly described to
prove the restriction statement. The number of elements of
$\mathcal{B}$ in the component corresponding to $V_{i}$ is equal
to $s_{j}$ if $i\in P_{j}$ which implies the dimension statement.
Further details are omitted.
\end{proof}

\begin{example}\label{run2} Consider the matrix $A$ of example \ref{run}. Its digraph
is shown in Fig.1.
$$
\xymatrix{ e_{1}\ar[r]&e_{4}\ar@(ul,ur)^{3}\ar@/_/[l]_{-2}&\\
e_{2}\ar[r]&e_{5}\ar[r]\ar@(ul,ur)^{3}\ar@/_/[l]_{-2}&e_{7}\ar@(ul,ur)\\
e_{3}\ar[r]&e_{6}\ar[r]\ar@(ul,ur)^{3}\ar@/_/[l]_{-2}&
e_{8}\ar@(ul,ur)}
$$
\begin{center}
Fig.1.  - the digraph for $A$ of example \ref{run}

\end{center}
In this case $\mathbb{C}^{8}=V_{1}\oplus V_{2}\oplus V_{3}$ where
$V_{1}=\langle e_{1},e_{4}\rangle$, $V_{2}=\langle
e_{2},e_{5},e_{7}\rangle$, $V_{3}=\langle
e_{3},e_{6},e_{8}\rangle$. $G_{1}=\left[%
\begin{array}{cc}
  0 & -2 \\
  1 & 3 \\
\end{array}%
\right]$, $G_{2}=\left[%
\begin{array}{cc|c}
  0 & -2 & 0 \\
  1 & 3 & 0 \\
  \hline
  0 & 1 & 1 \\
\end{array}%
\right]$.
\end{example}
\subsubsection{Decomposition by permutation}\
Denote the matrix conjugacy relation by $\simeq$.
\begin{theorem} \label{33} In the notations of \ref{311} we have
\begin{eqnarray*}
A\simeq \bigoplus_{j=1}^{t}(m_{j}-m_{j+1}) G_{j}.
\end{eqnarray*}
\end{theorem}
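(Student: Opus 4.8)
The plan is to read off the statement directly from Theorem~\ref{31}, the only genuinely new ingredient being a reordering of the standard basis. First I would record what the proof of Theorem~\ref{31} actually produces: each $V_i=\mathbb{C}[A]\cdot e_i$ is the span of the elements of $\mathcal{B}$ lying in the $i$-th weakly connected component of the digraph of $A$. In particular every $V_i$ is a \emph{coordinate} subspace, and by Theorem~\ref{31}(1) the decomposition $\mathbb{C}^n=\bigoplus_{i=1}^{m_1}V_i$ partitions $\mathcal{B}$ into $m_1$ disjoint blocks, one per summand.

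Next I would assemble a new ordered basis of $\mathbb{C}^n$ by listing the basis of $V_1$ inherited from $\mathcal{B}$, then that of $V_2$, and so on through $V_{m_1}$. Passing from $\mathcal{B}$ to this reordered basis is effected by a permutation matrix $P$, so that $P^{-1}AP$ is block diagonal with the restrictions $A|_{V_i}$ down the diagonal; hence $A\simeq\bigoplus_{i=1}^{m_1}A|_{V_i}$. This conjugation by a permutation is the ``decomposition by permutation'' announced in the title of the subsection, and its legitimacy rests precisely on the $V_i$ being coordinate subspaces established above.

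Then I would invoke Theorem~\ref{31}(3): for $i\in P_j$ the restriction $A|_{V_i}$ with respect to $\mathcal{B}$ equals $G_j$ exactly (not merely up to similarity), so all summands indexed by the same set $P_j$ are literally the same matrix $G_j$. Since $\{1,\dots,m_1\}=P_1\sqcup\cdots\sqcup P_t$ by the definition of the $P_j$, I would regroup the direct sum according to these sets to obtain $\bigoplus_{j=1}^{t}|P_j|\,G_j$, where $|P_j|\,G_j$ denotes $|P_j|$ copies of $G_j$.

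Finally, the single remaining computation is the multiplicity $|P_j|$. From $P_j=\{m_1-m_j+1,\dots,m_1-m_{j+1}\}$ one gets $|P_j|=(m_1-m_{j+1})-(m_1-m_j+1)+1=m_j-m_{j+1}$ (with the convention $m_{t+1}=0$, and $\sum_j(m_j-m_{j+1})=m_1$ telescopes to confirm the index count). Substituting yields $A\simeq\bigoplus_{j=1}^{t}(m_j-m_{j+1})\,G_j$, as claimed. I do not anticipate a serious obstacle: essentially all the work is done by Theorem~\ref{31}, and the present statement is its bookkeeping reformulation. The only point deserving a word of care is that the reordering is genuinely a permutation of $\mathcal{B}$ --- hence a conjugation rather than an arbitrary change of basis --- which is exactly what the digraph analysis underlying Theorem~\ref{31} guarantees.
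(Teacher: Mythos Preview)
Your proposal is correct and follows exactly the paper's approach: the paper's proof consists of the single sentence ``The described direct sum is obtained by permuting the rows and columns of $A$ following theorem~\ref{31},'' and your write-up simply unpacks that sentence, supplying the verification that the $V_i$ are coordinate subspaces, that the change of basis is a permutation, and that $|P_j|=m_j-m_{j+1}$.
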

\begin{proof} The described direct sum is obtained by
permuting the rows and columns of $A$ following theorem \ref{31}.
\end{proof}

\begin{example} The matrix $A$ of example \ref{run} is permutation conjugate
to $$G_{1}\oplus G_{2} \oplus G_{2}=\scriptsize
\left[%
\begin{array}{cc|ccc|ccc}
   &  -2&  &  &  &  &  &  \\
   1& 3 &  &  &  &  &  &  \\
   \hline
   &  &  & -2 &  &  &  &  \\
   &  & 1 & 3 &  &  &  &  \\
   &  &  &  1& 1 &  &  &  \\
   \hline
   &  &  &  &  &  & -2 &  \\
   &  &  &  &  & 1 & 3 &  \\
   &  &  &  &  &  & 1 & 1 \\
\end{array}%
\right]\normalsize .
$$

\end{example}

\subsubsection{Jordan  forms}\
Denote the Jordan block with eigenvalue $\lambda$ and size $i$ by
$J_{i}(\lambda)$.
\begin{theorem}\label{35} In the notations of \ref{311} we have
\begin{eqnarray*}
G_{j}\simeq\bigoplus_{\lambda}J_{\alpha(\lambda,j)}(\lambda).
\end{eqnarray*}
\end{theorem}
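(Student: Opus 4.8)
The plan is to reduce the statement to the assertion that $G_j$ is non-derogatory, meaning that it has exactly one Jordan block for each of its eigenvalues. Since $G_j$ is block lower triangular with the companion matrices $C(a_{i0},\ldots,a_{i,l_i-1})$, $i\in\{1,\ldots,j\}$, on its diagonal, its characteristic polynomial is the product over $i\le j$ of the $i$th stack-equation polynomials. As the $i$th such polynomial factors as $\prod_\lambda(X-\lambda)^{\mu(\lambda,i)}$, the characteristic polynomial of $G_j$ equals $\prod_\lambda(X-\lambda)^{\alpha(\lambda,j)}$, so the algebraic multiplicity of each eigenvalue $\lambda$ is $\alpha(\lambda,j)$. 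Once we know that $\lambda$ contributes only a single Jordan block, that block must have size equal to its algebraic multiplicity, and the direct sum over all $\lambda$ is exactly $\bigoplus_\lambda J_{\alpha(\lambda,j)}(\lambda)$.

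To establish non-derogatoriness I would exhibit $e_1$ as a cyclic vector for $G_j$. I would first isolate the two ways in which $G_j$ raises the index of a standard basis vector: inside a companion block its subdiagonal gives $G_je_p=e_{p+1}$ whenever $p$ is not the last index $s_i$ of that block, while at a block boundary the entry $E_{s_i+1,s_i}$ combined with the last column of $C(a_{i0},\ldots,a_{i,l_i-1})$ gives $G_je_{s_i}=e_{s_i+1}+w$ with $w\in\langle e_1,\ldots,e_{s_i}\rangle$. Using these I would prove by induction on $k$ that
\[
G_j^{k}e_1=e_{k+1}+w_k,\quad w_k\in\langle e_1,\ldots,e_k\rangle,\ 0\le k\le s_j-1;
\]
the inductive step uses only that $G_j$ maps $\langle e_1,\ldots,e_k\rangle$ into $\langle e_1,\ldots,e_{k+1}\rangle$ and that the coefficient of $e_{k+2}$ in $G_je_{k+1}$ equals $1$ in both of the cases above. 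This triangular form shows that $e_1,G_je_1,\ldots,G_j^{s_j-1}e_1$ are linearly independent, so $e_1$ is cyclic and $G_j$ is conjugate to the companion matrix of its characteristic polynomial; in particular $G_j$ is non-derogatory.

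The point requiring the most care is the leading-coefficient claim at the two structurally distinct positions, together with the degenerate stacks of length $l_i=1$. For such a stack the corresponding diagonal block is the $1\times1$ matrix $[\lambda]$ and carries no subdiagonal, so the index is raised solely by the linking entry $E_{s_i+1,s_i}$, and one must check that the coefficient of the newly reached basis vector is still $1$ and not $0$. Beyond this bookkeeping at the block boundaries the induction is routine, and combining the cyclicity of $e_1$ with the characteristic polynomial computed in the first step yields the stated Jordan form.
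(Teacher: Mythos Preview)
Your proposal is correct and follows essentially the same route as the paper's own proof: both compute the characteristic polynomial from the block lower-triangular structure and then show that $G_j$ is non-derogatory by exhibiting $e_1=[1\,|\,0\,|\,\cdots\,|\,0]^T$ as a cyclic vector. The paper argues cyclicity more tersely, simply observing that the companion-matrix subdiagonals together with the linking entries $E_{s_i+1,s_i}$ give $G_j$ a global nonzero subdiagonal, whereas you spell out the same fact via the induction $G_j^{k}e_1=e_{k+1}+w_k$; your extra care with the $l_i=1$ boundary case is sound and not needed beyond what the subdiagonal observation already covers.
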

\begin{proof} The result follows from the lower triangular block structure of $G_{j}$. $G_{j}$ is in block lower triangular form therefore its
characteristic polynomial $\chi(G_{j},x)$ is equal to the product
of the characteristic polynomials of diagonal blocks. It follows
that
$\chi(G_{j},x)=\prod\limits_{\lambda}(\lambda-x)^{\alpha(\lambda,j)}$.
The diagonal blocks of $G_{j}$ have subdiagonals with all elements
equal to $1$ and only zeros under subdiagonals.  The only nonzero
elements outside diagonal blocks of $G_{j}$ are $1$'s in the
corner positions between diagonal blocks.  It follows that $G_{j}$
has the global nonzero subdiagonal.  It follows that $G_{j}$ has a
cyclic vector $[1|0|0|...|0]^{T}$. Hence the minimal polynomial of
$G_{j}$ is $\pm\chi(G_{j},x)$. It follows $G_{j}$ has one Jordan
block for each eigenvalue.
\end{proof}
\begin{theorem} \label{37}In the notations of \ref{311} we have
\begin{equation}\label{dir_sum}
A\simeq\bigoplus_{j=1}^{t}(m_{j}-m_{j+1})\Big(
\bigoplus_{\lambda}J_{\alpha(\lambda,j))}(\lambda)\Big).
\end{equation}
\end{theorem}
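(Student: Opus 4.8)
The plan is to chain together the three structural results already established for matrices in $\mathcal{P}_{\pi}$. Theorem \ref{33} gives the decomposition by permutation
\begin{eqnarray*}
A\simeq \bigoplus_{j=1}^{t}(m_{j}-m_{j+1}) G_{j},
\end{eqnarray*}
expressing $A$ up to conjugacy as a direct sum in which each block $G_{j}$ appears with multiplicity $m_{j}-m_{j+1}$. Theorem \ref{35} in turn identifies the Jordan form of each individual summand as
\begin{eqnarray*}
G_{j}\simeq\bigoplus_{\lambda}J_{\alpha(\lambda,j)}(\lambda).
\end{eqnarray*}
So the whole argument reduces to substituting the second relation into the first.

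First I would invoke Theorem \ref{33} to write $A$ as the indicated direct sum of copies of the $G_{j}$. Next I would apply Theorem \ref{35} to replace each $G_{j}$ by its Jordan form $\bigoplus_{\lambda}J_{\alpha(\lambda,j)}(\lambda)$. The only fact beyond these two theorems that the argument uses is that matrix conjugacy $\simeq$ respects direct sums: if $B_{j}\simeq B_{j}'$ for each $j$, then $\bigoplus_{j}B_{j}\simeq\bigoplus_{j}B_{j}'$, and likewise multiplicities of a summand may be carried through unchanged. This is the standard fact that conjugating the summands by a block-diagonal change of basis conjugates the whole direct sum, so no separate proof is needed. Substituting term by term then yields exactly
\begin{eqnarray*}
A\simeq\bigoplus_{j=1}^{t}(m_{j}-m_{j+1})\Big(\bigoplus_{\lambda}J_{\alpha(\lambda,j)}(\lambda)\Big),
\end{eqnarray*}
which is the claimed equation \eqref{dir_sum}.

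There is essentially no obstacle here: the theorem is a formal corollary obtained by composing Theorems \ref{33} and \ref{35} and using compatibility of conjugacy with direct sums. The only point requiring minor care is bookkeeping — making sure the multiplicity factor $(m_{j}-m_{j+1})$ attached to $G_{j}$ is preserved when $G_{j}$ is rewritten in Jordan form, so that it multiplies the entire inner direct sum $\bigoplus_{\lambda}J_{\alpha(\lambda,j)}(\lambda)$ rather than any single block. Since conjugacy acts summand-wise and the multiplicities are just repetitions of identical summands, this transfer is immediate, and the proof is complete once the two substitutions are made.
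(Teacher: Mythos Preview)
Your proposal is correct and matches the paper's own proof, which simply states that the result follows from Theorems \ref{33} and \ref{35}. You have spelled out the one implicit step (that conjugacy is compatible with direct sums) more carefully than the paper does, but the approach is identical.
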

\begin{proof} It follows from theorems \ref{33} and \ref{35}.
\end{proof}
\begin{example} The Jordan form of $A$ from example \ref{run} is
shown in Fig.2.
$$
\xymatrix{e_{1}'\ar@(ul,ur)^{2}&e_{4}'\ar@(ul,ur)&\\
e_{2}'\ar@(ul,ur)^{2}&e_{5}'\ar[r]\ar@(ul,ur)&e_{7}'\ar@(ul,ur)\\
e_{3}'\ar@(ul,ur)^{2}&e_{6}'\ar[r]\ar@(ul,ur)& e_{8}'\ar@(ul,ur)}
$$
\begin{center}
Fig.2. - the Jordan form digraph of $A$ from example \ref{run}
\end{center}
\end{example}
\subsection{Canonical plane forms of Jordan matrices}\
The goal of this subsection is to show that a direct sum of Jordan
matrices is conjugate to a canonical plane matrix.

\subsubsection{Weyr characteristic of canonical plane matrices}

The Jordan form of a matrix $B$ with the single eigenvalue
$\lambda$ is determined by its Weyr characteristic sequence
$\Omega(B,\lambda)=(\omega_{1},\omega_{2},...)$ where $\omega_{i}$
is the number of Jordan blocks of size at least $i$ (see
\cite{strang} for a discussion and recent applications).
\begin{example} If $B\simeq 2J_{1}(\lambda)\oplus J_{3}(\lambda)\oplus
J_{4}(\lambda)$ then $\Omega(B,\lambda)=(4,2,2,1,0,...)$,
$\omega_{u}=0$ for all $u\ge 5$.
\end{example}
The Jordan form of any matrix $B$ with eigenvalues
$\lambda_{1},\lambda_{2},...$ is determined by the Weyr array
$\Omega(B)=(\Omega(B,\lambda_{1}),\Omega(B,\lambda_{2}),...)$.

\begin{theorem} \label{3.9} In the notations of \ref{311} let an eigenvalue $\lambda$ of $A$ have nonzero
multiplicity $\mu(\lambda,j)$ if and only if $j\in
\{j_{1},j_{2},...,j_{v}\}$, $j_{1}<j_{2}<..<j_{v}$ Then
\begin{eqnarray*}
\Omega(A,\lambda)=(\underbrace{m_{j_{1}},...,m_{j_{1}}}_{\mu(\lambda,j_{1})\
copies}, \underbrace{m_{j_{2}},...,m_{j_{2}}}_{\mu(\lambda,j_{2})\
copies},...,\underbrace{m_{j_{v}},...,m_{j_{v}}}_{\mu(\lambda,j_{v})\
copies},0,...).
\end{eqnarray*}
\end{theorem}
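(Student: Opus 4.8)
The plan is to read the Weyr characteristic directly off the Jordan decomposition provided by Theorem \ref{37}. By equation (\ref{dir_sum}), for the fixed eigenvalue $\lambda$ the Jordan blocks of $A$ are exactly $(m_{j}-m_{j+1})$ copies of $J_{\alpha(\lambda,j)}(\lambda)$ for each $j\in\{1,\ldots,t\}$, where a block $J_{0}(\lambda)$ is empty and contributes nothing. So the first step is to identify which values $\alpha(\lambda,j)$ actually occur as block sizes, and with what multiplicity.

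The key structural observation is that $\alpha(\lambda,j)=\sum_{i=1}^{j}\mu(\lambda,i)$ is a nondecreasing step function of $j$ that increases only at the indices $j_{1},\ldots,j_{v}$. Concretely, I would set $\alpha_{k}=\alpha(\lambda,j_{k})=\sum_{r=1}^{k}\mu(\lambda,j_{r})$, so that $0<\alpha_{1}<\alpha_{2}<\ldots<\alpha_{v}$ and $\alpha(\lambda,j)=\alpha_{k}$ exactly for $j_{k}\le j\le j_{k+1}-1$ (with the convention $j_{v+1}-1=t$), while $\alpha(\lambda,j)=0$ for $j<j_{1}$. Summing the copy counts over each such interval and telescoping gives that the size-$\alpha_{k}$ blocks occur with multiplicity $\sum_{j=j_{k}}^{j_{k+1}-1}(m_{j}-m_{j+1})=m_{j_{k}}-m_{j_{k+1}}$ for $k<v$, and $m_{j_{v}}-m_{t+1}=m_{j_{v}}$ for $k=v$, using $m_{t+1}=0$.

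The final step is to convert this block-size multiset into the Weyr sequence, where $\omega_{i}$ is the number of Jordan blocks of size at least $i$. Since the distinct block sizes are $\alpha_{1}<\ldots<\alpha_{v}$, the value $\omega_{i}$ is constant on each range $\alpha_{k-1}<i\le\alpha_{k}$ (with $\alpha_{0}=0$), and there it equals the total number of blocks of size $\alpha_{k}$ or larger; summing the multiplicities found above for the sizes $\alpha_{k},\ldots,\alpha_{v}$ telescopes to $(m_{j_{k}}-m_{j_{k+1}})+\ldots+(m_{j_{v-1}}-m_{j_{v}})+m_{j_{v}}=m_{j_{k}}$. Hence $\omega_{i}=m_{j_{k}}$ for the $\alpha_{k}-\alpha_{k-1}=\mu(\lambda,j_{k})$ indices $i$ in that range, and $\omega_{i}=0$ for $i>\alpha_{v}$, which is precisely the asserted list. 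I expect no genuine obstacle here: the argument is purely combinatorial bookkeeping, and the only points demanding care are the consistent handling of the empty blocks $J_{0}(\lambda)$ arising for $j<j_{1}$ and the boundary conventions $m_{t+1}=0$ and $\alpha_{0}=0$, which make both telescoping sums close correctly.
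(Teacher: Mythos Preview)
Your proposal is correct and follows essentially the same approach as the paper: both derive the Weyr sequence from the Jordan decomposition of Theorem~\ref{37} by analyzing, for fixed $\lambda$, which block sizes $\alpha(\lambda,j)$ occur and with what multiplicity. The paper presents this as an informal inductive pattern (building $\Omega(A,\lambda)$ stack by stack), whereas you carry out the same count with explicit telescoping sums, which makes the bookkeeping cleaner but is not a genuinely different argument.
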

\begin{proof} The statement follows from theorem \ref{37} by reordering the direct sum
(\ref{dir_sum}), counting the number of Jordan blocks of $A$ of
given $\lambda$ and size and using induction. We have that
\begin{eqnarray*}
A\simeq\bigoplus_{j=1}^{t}(m_{j}-m_{j+1})\Big(
\bigoplus_{\lambda}J_{\alpha(\lambda,j)}(\lambda)\Big)\simeq\\
\bigoplus_{\lambda}\Big(\bigoplus_{j=1}^{t}(m_{j}-m_{j+1})J_{\alpha(\lambda,j)}(\lambda)\Big)=\\
\bigoplus_{\lambda}\Big(\bigoplus_{j\in\{j_{1},j_{2},...,j_{v}\}
}(m_{j}-m_{j+1})J_{\alpha(\lambda,j)}(\lambda)\Big).
\end{eqnarray*}
Thus for a given $\lambda$
\begin{enumerate}
\item the minimal size of a Jordan block is
$\alpha(\lambda,j_{1})=\mu(\lambda,j_{1})$, there are $m_{j_{1}}$
Jordan blocks of sizes at least $1,...,\mu(\lambda,j_{1})$, start
$\Omega(A,\lambda)$ as
$\underbrace{(m_{j_{1}},...,m_{j_{1}})}_{\mu(\lambda,j_{1})\
copies}$, \item the next possible size of a Jordan block is
$\alpha(\lambda,j_{2})$, there are
$m_{j_{1}}-(m_{j_{1}}-m_{j_{2}})=m_{j_{2}}$ Jordan blocks of sizes
at least $\mu(\lambda,j_{1})+1,...,\alpha(\lambda,j_{2})$,
continue $\Omega(A,\lambda)$ as
$(\underbrace{m_{j_{1}},...,m_{j_{1}}}_{\mu(\lambda,j_{1})\
copies},\underbrace{m_{j_{2}},...,m_{j_{2}}}_{\mu(\lambda,j_{2})\
copies})$, \item ...
\end{enumerate}
\end{proof}
\subsubsection{From Jordan forms to canonical plane
matrices}\label{322}

We describe an algorithm for finding a canonical plane matrix
which is conjugate to a given matrix $B$.  Rearrange $\Omega(B)$
array as follows:
\begin{enumerate}
\item sort the nonzero entries of $\Omega(B)$ in nonincreasing
order, \item collect the equal nonzero entries into a sequence of
multisets
\begin{eqnarray*}
M(B)=(M_{1},M_{2},...,M_{t}),
\end{eqnarray*}
all elements of $M_{i}$ are equal to some $m_{i}$,
$m_{1}>m_{2}>...>m_{t}$.
\end{enumerate}
Denote $|M_{i}|$ by $l_{i}$, denote the multisets of eigenvalues
corresponding to the elements of $M_{i}$ by
$\Lambda_{i}=[\lambda_{i1},...,\lambda_{i,l_{i}}]$.  Define
$\pi=(m_{1}^{l_{1}},...,m_{t}^{l_{t}})$.
\begin{theorem}\label{8} In the above notations
\begin{eqnarray*}
B\simeq
\widetilde{B}=\varphi(\textbf{P}_{\pi},(\underbrace{\lambda_{11},...,\lambda_{1,l_{1}}}_{1st\
block},...,\underbrace{\lambda_{t1},...,\lambda_{t,l_{t}}}_{t-th\
block})).
\end{eqnarray*}
\end{theorem}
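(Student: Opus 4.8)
The plan is to prove $B \simeq \widetilde{B}$ by showing that the two matrices have identical Weyr arrays, since over $\mathbb{C}$ equality of Weyr arrays is equivalent to conjugacy. The entire argument rests on feeding the canonical plane matrix $\widetilde{B}$ into Theorem \ref{3.9} and verifying that the sequence it returns is exactly the Weyr array $\Omega(B)$ we started from.

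First I would check that the construction is well posed: that $\pi=(m_{1}^{l_{1}},...,m_{t}^{l_{t}})$ is genuinely a partition of $n$ and that $\widetilde{B}\in\mathcal{P}_{\pi}$. The first point follows from the identity $\sum_{i}m_{i}l_{i}=\sum(\text{all entries of }\Omega(B))=n$, because for each eigenvalue the sum of its Weyr characteristic equals its algebraic multiplicity and these sum to $n$; note also that $|\Lambda_{i}|=|M_{i}|=l_{i}$, so $\Lambda_{i}$ is a legitimate root multiset for a degree-$l_{i}$ stack equation. For the second point, $\widetilde{B}=\varphi(\textbf{P}_{\pi},\cdots)$ is by definition a point of $\mathcal{P}_{\pi}$ with the multiset $\Lambda_{i}$ substituted into the $i$-th stack, so Theorem \ref{3.9} applies to it verbatim. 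It is worth recording that the eigenvalues of $\widetilde{B}$, namely the union of the root multisets $\Lambda_{1},...,\Lambda_{t}$, coincide with the eigenvalues of $B$.

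Next I would identify, for a fixed eigenvalue $\lambda$, the stack multiplicity $\mu(\lambda,i)$ of $\widetilde{B}$. Since the roots of the $i$-th stack equation of $\widetilde{B}$ form the multiset $\Lambda_{i}$, the quantity $\mu(\lambda,i)$ equals the multiplicity of $\lambda$ in $\Lambda_{i}$, which by the way $M(B)$ and the labels $\Lambda_{i}$ are built is exactly the number of entries of $\Omega(B,\lambda)$ equal to $m_{i}$. Applying Theorem \ref{3.9} to $\widetilde{B}$ then writes $\Omega(\widetilde{B},\lambda)$ as the value $m_{j_{1}}$ repeated $\mu(\lambda,j_{1})$ times, followed by $m_{j_{2}}$ repeated $\mu(\lambda,j_{2})$ times, and so on, where $m_{j_{1}}>m_{j_{2}}>\cdots$ run through the distinct values occurring in $\Omega(B,\lambda)$.

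The key structural fact, and the step I expect to carry the argument, is that every Weyr characteristic is a nonincreasing sequence: $\omega_{i}$ counts Jordan blocks of size at least $i$, so $\omega_{i}\ge\omega_{i+1}$. Consequently $\Omega(B,\lambda)$ is completely determined by its multiset of values listed in decreasing order with multiplicities, which is precisely the sequence Theorem \ref{3.9} produces for $\widetilde{B}$. Hence $\Omega(\widetilde{B},\lambda)=\Omega(B,\lambda)$ for every $\lambda$, so $\Omega(\widetilde{B})=\Omega(B)$ and therefore $B\simeq\widetilde{B}$. The main obstacle is not analytic but organizational: matching the grouping of the Weyr-array entries into the multisets $M_{i}$, together with their eigenvalue labels $\Lambda_{i}$, against the stack decomposition of $\pi$, and invoking monotonicity to guarantee that the reconstructed sequence lands in the correct nonincreasing order rather than some permutation of it.
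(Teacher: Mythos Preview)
Your proposal is correct and follows essentially the same route as the paper: both arguments apply Theorem~\ref{3.9} to $\widetilde{B}$ and verify that the resulting Weyr array coincides with $\Omega(B)$, hence $B\simeq\widetilde{B}$. Your write-up is in fact slightly more explicit than the paper's, since you isolate the monotonicity of each $\Omega(B,\lambda)$ as the reason the reconstructed sequence matches rather than merely being a permutation, whereas the paper leaves this implicit in its block-by-block induction on $|M|$.
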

\begin{proof} Using the theorem \ref{3.9} we construct $\Omega(\widetilde{B})$ by considering diagonal blocks of $\widetilde{B}$:
\begin{enumerate}
\item from the $1$st block we get the initial elements equal to
$m_{1}$ of $\Omega(\widetilde{B},\lambda)$ for $\lambda\in
\Lambda_{1}$, thus get initial subsequences of
$\Omega(\widetilde{B},\lambda)$ of form
$\underbrace{(m_{1},...,m_{1})}_{\mu(\lambda,1)}$ for each
$\lambda$ with $\mu(\lambda,1)\neq 0$,

\item from the $2$nd block we get elements equal to $m_{2}$ of
$\Omega(\widetilde{B},\lambda)$ for $\lambda\in\Lambda_{2}$, thus
form subsequences of $\Omega(\widetilde{B},\lambda)$ of length
$\alpha(\lambda,2)$ by adding
$\underbrace{(m_{2},...,m_{2})}_{\mu(\lambda,2)\ times}$ for each
$\lambda$ with $\mu(\lambda,1)\neq 0$, $\mu(\lambda,2)\neq 0$ or
initiate $\Omega(\widetilde{B},\lambda)$ as
$\underbrace{(m_{2},...,m_{2})}_{\mu(\lambda,2)\ times}$ for each
$\lambda$ with $\mu(\lambda,1)=0$, $\mu(\lambda,2)\neq 0$. \item
...

\end{enumerate}
By induction on $|M|$ we see that
$\Omega(\widetilde{B})=\Omega(B)$ therefore $\widetilde{B}\simeq
B$.
\end{proof}
\begin{definition}\rm
For any matrix $B$ denote its canonical plane matrix
$\widetilde{B}$ by $\mathcal{A}ff(B)$.
\end{definition}
\begin{remark} Note that the canonical plane form of a matrix in
general is different from the Jordan form and the rational form
(either in invariant factors or elementary divisors version) as
example \ref{run} shows.
\end{remark}
\subsection{Bijectivity of the correspondence between canonical plane matrices and Jordan forms}\
\begin{theorem}\label{313}
\begin{enumerate}
\item $A$, $A'$ - canonical plane $n\times n$ matrices. Then
\begin{eqnarray*}
A\neq A'\Longrightarrow A\not\simeq A'.
\end{eqnarray*}

\item $B$,$B'$  - any $n\times n$ matrices. Then
\begin{eqnarray*}
B\not\simeq B'\Longrightarrow \mathcal{A}ff(B)\neq
\mathcal{A}ff(B').
\end{eqnarray*}
\end{enumerate}
\end{theorem}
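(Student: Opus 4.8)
The plan is to handle the two implications separately, using throughout that the Weyr array $\Omega(\cdot)$ (with its eigenvalue labels) is a complete conjugacy invariant, so that two matrices are conjugate precisely when their Weyr arrays coincide; the forward direction of this is standard, and the converse is the remark that the Jordan form is determined by the Weyr array.

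The second statement is almost immediate by contraposition. Suppose $\mathcal{A}ff(B)=\mathcal{A}ff(B')$. Theorem \ref{8} gives $B\simeq\mathcal{A}ff(B)$ and $B'\simeq\mathcal{A}ff(B')$, so transitivity of conjugacy yields $B\simeq\mathcal{A}ff(B)=\mathcal{A}ff(B')\simeq B'$, i.e. $B\simeq B'$; this is exactly the contrapositive of the claim, and nothing further is needed.

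For the first statement I would again argue by contraposition, reducing it to an idempotency relation. Since the construction of $\mathcal{A}ff(\cdot)$ in \ref{322} uses only the Weyr array, the map $B\mapsto\mathcal{A}ff(B)$ is constant on conjugacy classes; hence $A\simeq A'$ forces $\Omega(A)=\Omega(A')$ and therefore $\mathcal{A}ff(A)=\mathcal{A}ff(A')$. It then suffices to prove $\mathcal{A}ff(A)=A$ for every canonical plane matrix $A$, for then $A=\mathcal{A}ff(A)=\mathcal{A}ff(A')=A'$, the contrapositive of the desired implication. To establish this idempotency for $A\in\mathcal{P}_\pi$ with $\pi=(m_1^{l_1},\dots,m_t^{l_t})$, I would invoke Theorem \ref{3.9}, which says the value $m_j$ occurs exactly $\mu(\lambda,j)$ times in $\Omega(A,\lambda)$. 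Summing over $\lambda$, the value $m_j$ occurs $\sum_\lambda\mu(\lambda,j)=l_j$ times among all entries of $\Omega(A)$, since the $j$-th stack equation has degree $l_j$. Because the addends $m_1>\dots>m_t$ of a partition are distinct, the sorting-and-grouping step of the algorithm in \ref{322} recovers the multisets $M_j$ with $|M_j|=l_j$ and common value $m_j$, hence reconstructs the same partition $\pi$; tracking eigenvalue labels, the attached multiset $\Lambda_j$ consists of each $\lambda$ with multiplicity $\mu(\lambda,j)$, which is precisely the root multiset of the $j$-th stack equation of $A$. As a multiset of roots determines the companion-block coefficients by Vieta's formulas and the shift is fixed by $\pi$, the reconstructed matrix equals $A$.

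The main obstacle is exactly this bookkeeping in the idempotency step: confirming that the grouping in \ref{322} matches the stack structure of $\pi$ entry-for-entry, so that no two distinct stacks collide in value and the multiplicities come out of Theorem \ref{3.9} as the $l_j$. Once that verification is in place, both implications follow formally from Theorems \ref{3.9} and \ref{8}.
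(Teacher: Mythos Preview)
Your proposal is correct, and both parts ultimately rest on the same ingredients the paper uses (Theorems~\ref{3.9} and~\ref{8}), but the packaging differs in a way worth noting. For part~2 your contrapositive via $B\simeq\mathcal{A}ff(B)=\mathcal{A}ff(B')\simeq B'$ is shorter and cleaner than the paper's argument, which instead unwinds $\Omega(B)\neq\Omega(B')$ to find some value $m$ and eigenvalue $\lambda$ whose multiplicity in $\Omega(\cdot,\lambda)$ differs and then traces this through the construction in~\ref{322} to conclude $L(B)\neq L(B')$. For part~1 the paper argues directly: it splits into the cases $\pi\neq\pi'$ (where the multisets of partition addends, hence the multisets of Weyr entries, differ) and $\pi=\pi'$ (where $L(A)\neq L(A')$ forces some $\mu(\lambda,i)$ to differ, whence $\Omega(A,\lambda)\neq\Omega(A',\lambda)$ by Theorem~\ref{3.9}). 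Your route instead proves the idempotency $\mathcal{A}ff(A)=A$ on canonical plane matrices and combines it with conjugacy-invariance of $\mathcal{A}ff$; the bookkeeping you flag as the ``main obstacle'' is exactly the content of the paper's two cases, reorganized. What your approach buys is a reusable structural statement (idempotency) and a uniform argument without case-splitting; what the paper's buys is that each case is a one-line appeal to Theorem~\ref{3.9} with no need to track the full reconstruction of $\pi$ and the $\Lambda_j$.
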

\begin{proof}

1. Let $A\in\mathcal{P}_{\pi}$, $A'\in\mathcal{P}_{\pi'}$, $A\neq
A'$. If $\pi\neq \pi'$ then the sets of addends of $\pi$ and
$\pi'$ are different which by theorem \ref{3.9} implies
$\Omega(A)\neq \Omega(A')$. If $\pi=\pi'$ then $A=S+L(A)$,
$A'=S+L(A')$ as in the proof of theorem \ref{aff-th}. It follows
that $L(A)\neq L(A)'$ hence for at least one $\lambda\in
\mathbb{C}$ the multiplicity functions $\mu(\lambda,i)$ for $A$
and $A'$ are different. By theorem \ref{3.9} it folows that
$\Omega(A,\lambda)\neq \Omega(A',\lambda)$.

2. Let $B\not\simeq B'$. Then $\Omega(B)\neq \Omega(B')$. Hence
there exist $m$  and $\lambda\in \mathbb{C}$ such that the
multiplicities of $m$ in $\Omega(B,\lambda)$ and
$\Omega(B',\lambda)$ are different. Considering $M(B)$ and $M(B')$
as explained at the beginning of \ref{322} and theorem \ref{8} we
see that $L(B)\neq L(B')$. It follows that $ \mathcal{A}ff(B)\neq
\mathcal{A}ff(B')$.
\end{proof}
\subsection{Conclusion} For any $n\in \mathbb{N}$, $n\ge 2$, define
$\mathcal{P}=\bigcup\limits_{\pi}\mathcal{P}_{\pi},$ where the
union is taken over all partitions of $n$.

\begin{theorem}$\mathcal{P}$ contains each $n\times n$ matrix conjugacy class
exactly once.
\end{theorem}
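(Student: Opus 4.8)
The plan is to read this statement as the conjunction of two assertions—existence and uniqueness of a canonical plane representative in each conjugacy class—and to assemble it directly from Theorems \ref{8} and \ref{313}, which have already carried out all of the structural work. Concretely, I would show that each conjugacy class meets $\mathcal{P}$ in at least one point (existence) and in at most one point (uniqueness), and then combine the two bounds.

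For existence I would fix an arbitrary conjugacy class and choose any representative $B$. Theorem \ref{8} produces a specific canonical plane matrix $\widetilde{B}=\mathcal{A}ff(B)$, lying in $\mathcal{P}_{\pi}$ for the partition $\pi=(m_{1}^{l_{1}},\dots,m_{t}^{l_{t}})$ read off from the Weyr array $\Omega(B)$, and asserts $B\simeq\widetilde{B}$. Since $\mathcal{P}_{\pi}\subseteq\mathcal{P}$, the conjugacy class of $B$ meets $\mathcal{P}$; as $B$ was an arbitrary representative, every conjugacy class contains at least one element of $\mathcal{P}$.

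For uniqueness I would take the contrapositive of Theorem \ref{313}, part 1. Suppose $A,A'\in\mathcal{P}$ both lie in the given conjugacy class, so $A\simeq A'$. Theorem \ref{313}(1) states that distinct canonical plane matrices are never conjugate, so $A\simeq A'$ forces $A=A'$. Hence each conjugacy class contains at most one point of $\mathcal{P}$, and combining with the previous paragraph, exactly one.

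The only place I would flag as demanding care is that the uniqueness input, Theorem \ref{313}(1), must compare canonical plane matrices drawn from possibly different blocks $\mathcal{P}_{\pi}$ and $\mathcal{P}_{\pi'}$—and it does, since its proof splits into the case $\pi\neq\pi'$ (where the differing addend sets force different Weyr arrays via Theorem \ref{3.9}) and the case $\pi=\pi'$ (where the differing linear parts $L(A)\neq L(A')$ are again separated at the level of Weyr data by Theorem \ref{3.9}). Thus uniqueness is genuinely global over the whole union $\mathcal{P}$, not merely within a single affine plane, and no additional cross-partition argument is required beyond what Theorem \ref{aff-th2} and Theorem \ref{313} already supply. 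With both directions in hand, the theorem is pure assembly and presents no further obstacle.
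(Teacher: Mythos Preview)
Your proposal is correct and matches the paper's own proof, which simply states that the result follows from Theorems \ref{8} and \ref{313}. You have merely spelled out the existence/uniqueness split implicit in that one-line citation, invoking Theorem \ref{8} for existence and the contrapositive of Theorem \ref{313}(1) for uniqueness, exactly as intended.
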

\begin{proof} The statement follows from theorems \ref{8} and \ref{313}.
\end{proof}

\subsection{Appendix - canonical plane matrices for two values of $n$}

\subsubsection{$n=2$}

 There are two partitions of $2$: $\pi_{1}=(2)$ and $\pi_{2}=(1,1)$. The affine planes are
\begin{enumerate}

\item
   $\mathcal{P}_{\pi_{1}}=\{\left[%
\begin{array}{c|c}
  a &   \\
   \hline
     & a \\
\end{array}%
\right], \forall\ a\in \mathbb{C}\}$, each matrix in
$\mathcal{P}_{\pi_{1}}$ is in its Jordan form $2J_{1}(a)$,

\item $\mathcal{P}_{\pi_{2}}=\{\left[%
\begin{array}{c|c}
   &  a \\
   \hline
   1  & b \\
\end{array}%
\right], \forall\ a,b\in \mathbb{C}\}$, the Jordan form depends on
the roots of the characteristic polynomial $x^2-bx-a$:

\begin{enumerate}

\item if there are $2$ simple roots $\lambda_{1}$, $\lambda_{2}$
then $J_{1}(\lambda_{1})\oplus J_{1}(\lambda_{2})$, \item
 if there
is $1$ double root $\lambda_{0}$ then  $J_{2}(\lambda_{0})$.
\end{enumerate}

\end{enumerate}

\subsubsection{$n=5$}

We describe all canonical plane matrices for $n=5$ and give the
corresponding Jordan forms. One can check that there are $27$
subcases which correspond to the $27$ distinct Jordan forms for
$5\times 5$ complex matrices. Below for each subcase distinct
arguments for polynomial roots and Jordan blocks mean distinct
complex numbers.

There are seven partitions of $5$: $\pi_{1}=(5)$, $\pi_{2}=(4,1)$,
$\pi_{3}=(3,2)$, $\pi_{4}=(3,1,1)$, $\pi_{5}=(2,2,1)$,
$\pi_{6}=(2,1,1,1)$, $\pi_{7}=(1,1,1,1,1)$. For each partition
$\pi$ we give the description of $\mathcal{P}_{\pi}$:
\begin{enumerate}
    \item $\mathcal{P}_{\pi_{1}}=\footnotesize\{\left[%
\begin{array}{ccccc}
  a &  &  &  &  \\
   & a &  &  &  \\
   &  & a &  &  \\
   &  &  & a &  \\
   &  &  &  & a \\
\end{array}%
\right], \forall\ a\in \mathbb{C}\normalsize\},$ each matrix in
$\mathcal{P}_{\pi_{1}}$ is in its Jordan form $5J_{1}(a)$;
    \item $\mathcal{P}_{\pi_{2}}=\footnotesize\{\left[%
\begin{array}{cccc|c}
  a &  &  &  &  \\
   & a &  &  &  \\
   &  & a &  &  \\
   &  &  & a &  \\
   \hline
   &  &  & 1 & b \\
\end{array}%
\right], \forall\ a,b\in \mathbb{C}\normalsize\},$ the Jordan
forms:
\begin{enumerate}

\item $4J_{1}(a)\oplus J_{1}(b)$ if $a\ne b$, \item
 $3J_{1}(a)\oplus
J_{2}(a)$ if $a=b$;
\end{enumerate}
    \item $\mathcal{P}_{\pi_{3}}=\footnotesize\{\left[%
\begin{array}{ccc|cc}
  a &  &  &  &  \\
   & a &  &  &  \\
   &  & a &  &  \\
   \hline
   & 1 &  & b &  \\
   &  & 1 &  & b \\
\end{array}%
\right], \forall\ a,b\in \mathbb{C}\normalsize\},$ the Jordan
forms:

\begin{enumerate}

\item $3J_{1}(a)\oplus 2J_{1}(b)$ if $a\ne b$,

\item $J_{1}(a)\oplus 2J_{2}(a)$ if $a=b$;
\end{enumerate}

    \item $\mathcal{P}_{\pi_{4}}=\footnotesize\{\left[%
\begin{array}{ccc|cc}
  a &  &  &  &  \\
   & a &  &  &  \\
   &  & a &  &  \\
   \hline
   &  & 1 &  & b \\
   &  &  & 1 & c \\
\end{array}%
\right], \forall\ a,b,c\in \mathbb{C}\normalsize\},$ the Jordan
form depends on the roots of the polynomial $x^2-cx-b$ - the
characteristic polynomial of the block $
\footnotesize\left[%
\begin{array}{cc}
  & b \\
 1 & c \\
\end{array}%
\right] $:
\begin{enumerate}

\item $2$ simple roots $\lambda_{1}$, $\lambda_{2}$,
$\lambda_{i}\ne a$ -  $3J_{1}(a)\oplus J_{1}(\lambda_{1})\oplus
J_{1}(\lambda_{2})$,

\item $2$ simple roots $\lambda_{1}$, $\lambda_{2}$,
$\lambda_{1}=a$, $\lambda_{2}\ne a$ -  $2J_{1}(a)\oplus
J_{2}(a)\oplus J_{1}(\lambda_{2})$,

\item $1$ double root $\lambda_{1}\ne a$ - $3J_{1}(a)\oplus
J_{2}(\lambda_{1})$,

\item $1$ double root $\lambda_{1}= a$ - $2J_{1}(a)\oplus
J_{3}(a)$,
\end{enumerate}

\item $\mathcal{P}_{\pi_{5}}=\footnotesize\{\left[%
\begin{array}{cccc|c}
   &  & a &  &  \\
   &  &  & a &  \\
  1 &  & b &  &  \\
   & 1 &  & b &  \\
   \hline
   &  &  & 1 & c \\
\end{array}%
\right], \forall\ a,b,c\in \mathbb{C}\normalsize\},$ the Jordan
form depends on the roots of the polynomial $x^2-bx-a$ - the
characteristic polynomial of the block $
\footnotesize\left[%
\begin{array}{cc}
  & a \\
 1 & b \\
\end{array}%
\right] $:
\begin{enumerate}

\item $2$ simple roots $\lambda_{1}$, $\lambda_{2}$,
$\lambda_{i}\ne c$ -  $J_{1}(c)\oplus 2J_{1}(\lambda_{1})\oplus
2J_{1}(\lambda_{2})$,

\item $2$ simple roots $\lambda_{1}$, $\lambda_{2}$,
$\lambda_{1}=c$, $\lambda_{2}\ne c$ - $J_{1}(c)\oplus
J_{2}(c)\oplus 2J_{1}(\lambda_{2})$,

\item $1$ double root $\lambda_{1}\ne c$ -
$2J_{2}(\lambda_{1})\oplus J_{1}(c)$,

\item $1$ double root $\lambda_{1}= c$ - $J_{2}(c)\oplus
J_{3}(c)$,
\end{enumerate}

\item $\mathcal{P}_{\pi_{6}}=\footnotesize\{\left[%
\begin{array}{cc|ccc}
  a &  &  &  &  \\
   &  a&  &  &  \\
   \hline
   &  1&  &  &b  \\
   &  & 1 &  & c \\
   &  &  & 1 & d \\
\end{array}%
\right], \forall\ a,b,c,d\in \mathbb{C}\normalsize\},$ the Jordan
form depends on the roots of the polynomial $x^3-dx^2-cx-b$ - the
characteristic polynomial of the block $
\footnotesize\left[%
\begin{array}{ccc}
  & &b \\
  1& & c \\
  &1& d
\end{array}%
\right] $:
\begin{enumerate}

\item $3$ simple roots $\lambda_{1}$, $\lambda_{2}$,
$\lambda_{3},$ $\lambda_{i}\ne a$ - $2J_{1}(a)\oplus
J_{1}(\lambda_{1})\oplus J_{1}(\lambda_{2})\oplus
J_{1}(\lambda_{3})$,

\item $3$ simple roots $\lambda_{1}$, $\lambda_{2}$,
$\lambda_{3},$ $\lambda_{1}= a$, $\lambda_{2,3}\ne a$ -
$J_{1}(a)\oplus J_{2}(a)\oplus J_{1}(\lambda_{2})\oplus
J_{1}(\lambda_{3})$,

\item $1$ double root $\lambda_{1}\ne a$ and $1$ simple root
$\lambda_{3}\ne a$ - $2J_{1}(a)\oplus J_{2}(\lambda_{1})\oplus
J_{1}(\lambda_{3}),$

\item $1$ double root $\lambda_{1}= a$ and $1$ simple root
$\lambda_{3}\ne a$ - $J_{1}(a)\oplus J_{3}(a)\oplus
J_{1}(\lambda_{3}),$

\item $1$ double root $\lambda_{1}\ne a$ and $1$ simple root
$\lambda_{3}= a$ - $J_{1}(a)\oplus J_{2}(a)\oplus
J_{2}(\lambda_{1}),$

\item $1$ triple root $\lambda_{1}\ne a$ - $2J_{1}(a)\oplus
J_{3}(\lambda_{1})$,

\item $1$ triple root $\lambda_{1}= a$ - $J_{1}(a)\oplus
J_{4}(a)$;
\end{enumerate}

\item $\mathcal{P}_{\pi_{7}}=\footnotesize\{\left[%
\begin{array}{ccccc}
   &  &  &  & a \\
  1 &  &  &  & b \\
   & 1 &  &  &  c\\
   &  & 1 &  &  d\\
   &  &  & 1 & e \\
\end{array}%
\right], \forall\ a,b,c,d,e\in \mathbb{C}\normalsize\},$ the
Jordan form depends on the roots of the characteristic polynomial
of the matrix $x^5-ex^4-dx^3-cx^2-bx-a$:
\begin{enumerate}

\item $5$ simple roots $\lambda_{i}$, $i\in \{1,...,5\}$ -
$\bigoplus_{i=1}^{5} J_{1}(\lambda_{i})$,

\item $1$ double root $\lambda_{1}$  and $3$ simple roots
$\lambda_{i}$, $i\in \{3,...,5\}$ - $J_{2}(\lambda_{1})\oplus
J_{1}(\lambda_{3})\oplus J_{1}(\lambda_{4})\oplus
J_{1}(\lambda_{5})$,

\item $2$ double roots $\lambda_{1},\lambda_{3}$  and $1$ simple
root $\lambda_{5}$ - $J_{2}(\lambda_{1})\oplus
J_{2}(\lambda_{3})\oplus J_{1}(\lambda_{5})$,

\item $1$ triple root $\lambda_{1}$  and $2$ simple roots
$\lambda_{4},\lambda_{5}$ - $J_{3}(\lambda_{1})\oplus
J_{1}(\lambda_{4})\oplus J_{1}(\lambda_{5})$,

\item $1$ triple root $\lambda_{1}$  and $1$ double root
$\lambda_{4}$ - $J_{3}(\lambda_{1})\oplus J_{2}(\lambda_{4})$,

\item $1$ root of order $4$ $\lambda_{1}$  and $1$ single root
$\lambda_{5}$ - $J_{4}(\lambda_{1})\oplus J_{1}(\lambda_{5})$,

\item $1$ root of order $5$ $\lambda_{1}$ - $J_{5}(\lambda_{1})$.
\end{enumerate}

\end{enumerate}


\end{document}